\theoremstyle{plain}
\newtheorem{theorem}{Theorem}[section]
\newtheorem{lemma}[theorem]{Lemma}
\newtheorem{proposition}[theorem]{Proposition}
\newtheorem{question}[theorem]{Question}
\theoremstyle{definition}
\newtheorem{definition}[theorem]{Definition}
\newcommand{\eps}{\ensuremath{\varepsilon}}
\newcommand{\NN}{\ensuremath{\mathbb N}}
\newcommand{\calH}{\mathcal{H}}
\newcommand{\calZ}{\mathcal{Z}}
\title{On a Ramsey-T\'uran variant of the Hajnal-Szemer\'edi theorem}
\author{  
  Rajko Nenadov 
  \thanks{
  	Department of Computer Science, ETH Zurich, Switzerland. Email: {\tt rnenadov@inf.ethz.ch}. Part of this research was performed when the author was at Monash University.  
  }
  \and
  Yanitsa Pehova
  \thanks{Mathematics Institute, University of Warwick, Coventry CV4 7AL, UK. Email: \texttt{y.pehova@warwick.ac.uk}. This work has received funding from the European Research Council (ERC) under the European Union’s Horizon 2020 research and innovation programme (grant agreement No 648509). This publication reflects only its authors' view; the European Research Council Executive Agency is not responsible for any use that may be made of the information it contains. Part of this research was performed when the author was visiting Monash University.}
}
\date{}
\begin{document}
\maketitle

\begin{abstract}
A seminal result of Hajnal and Szemer\'edi states that if a graph $G$ with $n$ vertices has minimum degree $\delta(G) \ge (r-1)n/r$ for some integer $r \ge 2$, then $G$ contains a $K_r$-factor, assuming $r$ divides $n$. Extremal examples which show optimality of the bound on $\delta(G)$ are very structured and, in particular, contain large independent sets. In analogy to the Ramsey-T\'uran theory, Balogh, Molla, and Sharifzadeh initiated the study of how the absence of such large independent sets influences sufficient minimum degree. We show the following two related results:
\begin{itemize}
	\item For any $r > \ell \ge 2$, if $G$ is a graph satisfying $\delta(G) \ge \frac{r - \ell}{r - \ell + 1}n + \Omega(n)$ and $\alpha_\ell(G) =o(n)$, that is, a largest $K_\ell$-free induced subgraph has at most $o(n)$ vertices, then $G$ contains a $K_r$-factor. This is optimal for $\ell = r - 1$ and extends a result of Balogh, Molla, and Sharifzadeh who considered the case $r = 3$.

	\item If a graph $G$ satisfies $\delta(G) = \Omega(n)$ and $\alpha_r^*(G) =o(n)$, that is, every induced $K_r$-free $r$-partite subgraph of $G$ has at least one vertex class of size $o(n)$, then it contains a $K_r$-factor. A similar statement is proven for a general graph $H$.
\end{itemize} 
\end{abstract}

\section{Introduction}

Given graphs $H$ and $G$, a collection of vertex-disjoint copies of $H$ in $G$ is called an \emph{$H$-tiling}. A \emph{perfect $H$-tiling} of $G$, or an \emph{$H$-factor} for short, is an $H$-tiling which covers all the vertices of $G$. Note that a perfect matching corresponds to a $K_2$-factor, thus the notion of an $H$-factor is a natural generalisation from edges to arbitrary graphs. 

Determining sufficient conditions for the existence of an $H$-factor is one of the fundamental lines of research in Extremal Graph Theory. Textbook results of Hall and Tutte give a sufficient condition for the existence of a perfect matching (e.g.\ see \cite{diestel}). The first treatment of a more general case is by Corr\'adi and Hajnal \cite{corradi1963maximal} who showed that if a graph with $n$ vertices has minimum degree $2n/3$ then it contains a $K_3$-factor, assuming an obvious divisibility condition. This is easily seen to be the best possible bound: consider a complete tripartite graph with parts of size $n/3 - 1, n/3$ and $n/3 + 1$. The minimum degree of such a graph is $2n/3 - 1$ and as every triangle has to be traversing it does not contain a triangle-factor. Hajnal and Szemer\'edi \cite{hajnalszemeredi} showed that $\delta(G) \ge (r-1)n/r$ is sufficient for the existence of a $K_r$-factor and a similar example shows that this is optimal. Characterising the best possible bound on a minimum degree for an arbitrary graph $H$ has attracted significant attention (some of the milestones along this road include \cite{alon1996h,komlos2000tiling,komlos2001proof}) until it was finally settled by K\"uhn and Osthus \cite{kuhn2009minimum}. 

Knowing that extremal examples for these results have very special structure, a natural follow-up question is how much the bound on a minimum degree can be weakened if we exclude graphs with such a structure? Here we consider exclusion of one particular feature of  extremal examples, namely the existence of a large independent set. This was first studied by Balogh, Molla, and Sharifzadeh \cite{balogh2016} who showed that if the independence number of $G$ is $o(n)$ and $\delta(G) \ge n/2 + \eps n$, for any constant $\eps > 0$, then $G$ contains a triangle-factor. This significantly weakens the bound $\delta(G) \ge 2n/3$ from the Corr\'adi-Hajnal theorem. By considering a graph $G$ consisting of two cliques of size $n/2 - 1$ and $n/2 + 1$, we see that this bound on minimum degree is the best possible.

More generally, let $\alpha_\ell(G)$ denote the size of a largest $K_\ell$-free induced subgraph of $G$ (the independence number of $G$ corresponds to $\alpha_2(G)$). Generalising the result of Balogh, Molla, and Sharifzadeh we ask the following question.

\begin{question} \label{question}
  Let $r \ge \ell \ge 2$ be integers and let $G$ be a graph with $n$ vertices and $\alpha_\ell(G) = o(n)$. What is the best possible minimum degree condition on $G$ that guarantees a $K_r$-factor?
\end{question}

There is a clear analogy between this question and the Ramsey-T\'uran theory: a central topic in the Ramsey-T\'uran theory is to determine $\mathbf{RT}_\ell(n, H, o(n))$, the smallest number of edges which guarantees that every graph graph $G$ witn $n$ vertices with $\alpha_\ell(G) = o(n)$ contains a copy of $H$. For more on the Ramsey-T\'uran theory see \cite{erdos70,erdHos1994turan,erdHos1983more,simonovits2001ramsey}. Here we replace a T\'uran-type conclusion with one along the lines of the Hajnal-Szemer\'edi theorem.

The case $r = 3$ and $\ell = 2$ of Question \ref{question} was answered in \cite{balogh2016}. Our first main result gives some progress for the general values of $r$ and $\ell$.

\begin{theorem} \label{thm:main1}
  Let $r > \ell \ge 2$ be integers. For any $\eps > 0$ there exists $\eps' > 0$ such that if $G$ is a graph with $n$ vertices where $r$ divides $n$, $\delta(G) \ge \frac{r - \ell}{r - \ell + 1}n + \eps n$ and $\alpha_\ell(G) \le \eps' n$ then $G$ contains a $K_r$-factor.
\end{theorem}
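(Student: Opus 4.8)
The plan is to prove Theorem~\ref{thm:main1} by the absorbing method: find a small \emph{absorbing set} $A\subseteq V(G)$, then an almost-perfect $K_r$-tiling of $G\setminus A$ leaving a tiny, $r$-divisible set $W$ of uncovered vertices, and finally use $A$ to absorb $W$, obtaining a $K_r$-factor of $G$. Set up constants $\eps\gg\eps''\gg\eps'>0$ and require $\alpha_\ell(G)\le\eps'n$.

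The engine behind every part is the identity $K_r=K_{r-\ell}\ast K_\ell$ (complete join) together with the following consequence of the hypotheses: if $Y$ is any copy of $K_{r-\ell}$ in $G$, then its common neighbourhood $N_Y=\bigcap_{v\in Y}N(v)$ has size at least $n-(r-\ell)\bigl(n-\delta(G)\bigr)\ge \tfrac{1}{r-\ell+1}n+(r-\ell)\eps n$, which, as $\alpha_\ell(G)\le\eps'n$, far exceeds $\alpha_\ell(G)$. Hence $G[N_Y]$ contains a copy of $K_\ell$, and $Y$ together with any such $K_\ell$ spans a copy of $K_r$; moreover this persists after deleting any set of at most $(r-\ell)\eps n-\eps'n$ vertices, so $Y$ extends to a $K_r$ even inside a mildly depleted host.

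For the almost-perfect tiling, observe that $\delta(G)$ exceeds the Hajnal--Szemer\'edi threshold $\tfrac{r-\ell-1}{r-\ell}n$ for a $K_{r-\ell}$-factor, since $\tfrac{r-\ell-1}{r-\ell}<\tfrac{r-\ell}{r-\ell+1}$; deleting fewer than $r-\ell$ vertices to fix divisibility, we get a $K_{r-\ell}$-factor $\mathcal F$ of $G$ with $|\mathcal F|\approx n/(r-\ell)$. The idea is to select a \emph{random} subfamily of $\approx n/r$ cliques of $\mathcal F$ as ``anchors'', dismantle the remaining $\approx \tfrac{\ell}{r(r-\ell)}n$ cliques, repartition the $\approx \tfrac{\ell}{r}n$ freed vertices into copies of $K_\ell$ (all but $o(n)$ of them, by greedily extracting $K_\ell$'s, legitimate since $\alpha_\ell$ of any subset is $o(n)$), and assign each $K_\ell$ to a distinct anchor inside whose common neighbourhood it lies. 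Randomness makes the freed set ``spread'', so each anchor $Y$ still has $\Omega(n)$ freed vertices in $N_Y$, and one wants a near-perfect assignment of $K_\ell$'s to anchors via a defect-Hall/greedy argument. \textbf{The main obstacle is exactly this last step:} many anchors may share almost all of their common neighbourhood, forcing the freed $K_\ell$'s to be drawn from a small region, and one must argue — using $\alpha_\ell(G)=o(n)$ and the $\eps n$ slack in the degree hypothesis (when $\ell=r-1$ even a crude vertex count suffices) — that such overlaps cannot be too severe, and process the anchors in an order respecting this so that only $o(n)$ vertices remain uncovered.

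For the absorbing set, for each $r$-element target $T$ lying in ``many'' copies of $K_r$ we build a large family of vertex-disjoint \emph{absorbers}: constant-size sets $D$ such that both $G[D]$ and $G[D\cup T]$ admit $K_r$-factors, constructed again through the $K_{r-\ell}\ast K_\ell$ decomposition and the common-neighbourhood fact of the second paragraph. A standard random selection then produces a set $A$ with $|A|\le\eps''n$, $r\mid|A|$, $G[A]$ having a $K_r$-factor, and such that $G[A\cup W]$ has a $K_r$-factor for every $W\subseteq V(G)\setminus A$ with $r\mid|W|$ and $|W|\le\eps''^2 n$. When $\ell<r-1$ a vertex need not lie in any copy of $K_r$, so a mild extra point is required: either arrange the almost-perfect tiling to leave only ``absorbable'' leftover vertices, or build the absorbing structure at the level of $K_\ell$-blocks rather than single vertices. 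Running the almost-perfect tiling on $G\setminus A$, letting $W$ be the resulting uncovered set (small, and $r$-divisible since $r\mid n$, $r\mid|A|$), and absorbing $W$ into $A$ completes the $K_r$-factor.
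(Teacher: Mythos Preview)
Your overall architecture (absorber + almost-cover) matches the paper, but both halves have genuine gaps that the paper resolves with specific tools you do not invoke.

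\medskip

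\textbf{The absorbing step.} You write that ``a standard random selection then produces a set $A$''. This is precisely where the paper says the standard approach breaks down. The constructions of R\"odl--Ruci\'nski--Szemer\'edi and H\`an--Person--Schacht require, for every $r$-set $S$, a count of $\Omega(n^{rt})$ $(S,t)$-absorbers so that after random sparsification and deletion of overlapping pairs each $S$ still retains many absorbers. Under the hypotheses of Theorem~\ref{thm:main1} this fails: as the paper explains (take the complete $r$-partite graph with a sparse $\alpha_\ell$-small graph $\Gamma$ inside each part), for an independent $r$-set $S\subseteq V_1$ every $(S,t)$-absorber must use a $\Gamma$-edge, and since $\Delta(\Gamma)=o(n)$ there are only $o(n^{rt})$ such absorbers in total. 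One can still obtain $\Theta(n)$ \emph{pairwise disjoint} $(S,t)$-absorbers for every $S$ (the paper gives an explicit ``switch'' construction with $t=r$), but a linear supply is not enough for the usual random selection to survive a union bound over all $\binom{n}{r}$ targets. The paper's Lemma~\ref{lemma:absorbing} replaces random selection by Montgomery's robust-matching template (Lemma~\ref{lemma:blueprint}): a bounded-degree bipartite graph $B_m$ on $X_m\cup Y_m$ versus $Z_m$ that contains a perfect matching after deleting any prescribed $\beta m$ vertices from $X_m$. Your proposal neither identifies this obstacle nor supplies a substitute mechanism, so as written the absorbing half does not go through. (Incidentally, every vertex \emph{does} lie in many copies of $K_r$ under these hypotheses, so your caveat about $\ell<r-1$ is not the real issue.)

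\medskip

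\textbf{The almost-cover.} You correctly flag the assignment of extracted $K_\ell$'s to $K_{r-\ell}$-anchors as ``the main obstacle'', but you do not actually overcome it: saying that overlaps ``cannot be too severe'' and that one should ``process the anchors in an order respecting this'' is not an argument, and it is not clear a defect-Hall count succeeds at this level of generality. The paper sidesteps the matching problem entirely by passing to the reduced graph via the Regularity Lemma: there Hajnal--Szemer\'edi gives a $K_{r-\ell+1}$-tiling of the reduced graph, and inside each corresponding $(r-\ell+1)$-tuple of clusters one greedily peels off transversal $K_{r-\ell}$'s whose common neighbourhood in the remaining cluster is of linear size (by regularity), hence contains a $K_\ell$ since $\alpha_\ell(G)\le\eps' n$. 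Regularity is exactly what guarantees the ``spread'' you were hoping to get from randomisation.
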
 

This shows that the minimum degree sufficient for the existence of a $K_{r - \ell + 1}$-factor is also sufficient for a $K_r$-factor if every small induced subgraph contains a copy of $K_\ell$. For $\ell = r - 1$ this becomes $\delta(G) \ge (1/2 + \eps)n$ which is easily seen to be optimal: consider a graph $G$ consisting of two disjoint complete graphs of order $n/2 - 1$ and $n/2 + 1$. Reiher and Schacht (see the appendix in \cite{balogh2016}) observed that the techniques from \cite{han2009perfect} can be used to show that $\delta(G) \ge (1/2 + \eps)n$ is sufficient for the existence of a $K_r$-factor if additionally every subset of $G$ of size $\eps' n$ induces quadratically many edges. Thus Theorem \ref{thm:main1} improves upon this by only requiring a much weaker condition $\alpha_{r-1}(G) \le \eps' n$. We discuss the optimality of Theorem \ref{thm:main1} in the case $2 \le \ell < r - 1$ in Section \ref{sec:remarks}.

Let us briefly discuss the excluded case $r = \ell$ in Theorem \ref{thm:main1}. The required minimum degree if $\alpha_r(G) = o(n)$ is clearly at most as large as if we would only know $\alpha_{r - 1}(G) = o(n)$. As the previous extremal example also shows that for $\ell = r$ we cannot go below $n/2$, we conclude that the minimum degree for $\ell = r$ is the same as for $\ell = r - 1$. Our second main result shows that if we instead require $\alpha^*_r(G) = o(n)$, where $\alpha^*_r(G)$ is defined as the smallest number such that every $r$-partite subgraph of $G$ induced by $r$ disjoint sets of size at least $\alpha_r^*(G)$ contains a copy of $K_r$, then one can take arbitrarily small degree. More generally, we prove such a statement for an arbitrary graph $H$.

\begin{definition}
  Let $H$ and $G$ be graphs. We define $\alpha_H^*(G)$ as the smallest number such that every $v(H)$-partite graph of $G$ induced by $v(H)$ disjoint subsets $V_1, \ldots, V_{v(H)} \subseteq V(G)$, with each $|V_i| \ge \alpha_H^*(G)$, contains a copy of $H$ with one vertex in each $V_i$.
\end{definition}

\begin{theorem} \label{thm:main2} 
  Let $H$ be a fixed graph with $h$ vertices. For any $\eps > 0$ there exists $\eps' > 0$ such that if $G$ is a graph with $n$ vertices where $h$ divides $n$, $\delta(G) \ge \eps n$ and $\alpha_H^*(G) \le \eps' n$, then $G$ contains an $H$-factor.
\end{theorem}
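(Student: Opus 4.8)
The plan is to run the absorption method. I reduce the theorem to two statements, both requiring $\eps'$ to be small in terms of $\eps$ and $H$. First, an \emph{absorbing lemma}: there is a set $M \subseteq V(G)$ with $h \mid |M|$ and $|M| \le h\eps' n$ such that $G[M]$ has an $H$-factor and, for every $U \subseteq V(G) \setminus M$ with $|U| \le 2h\eps' n$ and $h \mid |U|$, the graph $G[M \cup U]$ has an $H$-factor as well. Second, an \emph{almost-factor lemma}: the graph $G - M$ has an $H$-tiling covering all but at most $h\eps' n + h$ of its vertices. Given these, one reserves $M$, applies the almost-factor lemma to $G - M$ to obtain a leftover set $U$ with $|U| \le h\eps' n + h \le 2h\eps' n$ (and $h \mid |U|$, since $h \mid n$ and $h \mid |M|$), and then the absorbing property of $M$ produces an $H$-factor of $G[M \cup U]$; together with the almost-factor these copies of $H$ tile all of $V(G)$.

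The almost-factor lemma is the routine part, and the only place where $\alpha_H^*(G) \le \eps' n$ is applied directly to $G$ rather than to a neighbourhood. Partition $V(G - M)$ into classes $V_1, \dots, V_h$ of sizes as equal as possible. While each $V_i$ has at least $\eps' n$ vertices, the definition of $\alpha_H^*$ provides a copy of $H$ with exactly one vertex in each $V_i$; removing its $h$ vertices (one from each class) keeps the classes balanced, so we iterate. The process stops only once some, hence every, class has size about $\eps' n$, so at most $h\eps' n + h$ vertices remain uncovered.

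The heart of the argument is the absorbing lemma, and this is where both hypotheses are used. For an $h$-set $S = \{s_1, \dots, s_h\} \subseteq V(G)$, call a bounded-size set $A$ with $h \mid |A|$ an \emph{$S$-absorber} if both $G[A]$ and $G[A \cup S]$ contain $H$-factors. A canonical gadget is $A = F_1 \cup \dots \cup F_h$, where $F_1, \dots, F_h$ are pairwise-disjoint copies of $H$ with $s_i$ complete to $F_i$ and with distinguished vertices $g_i \in F_i$ chosen so that $\{g_1, \dots, g_h\}$ also spans a copy of $H$: then $\{F_1, \dots, F_h\}$ is an $H$-factor of $G[A]$, while $\{(F_i \setminus \{g_i\}) \cup \{s_i\}\}_{i \le h}$ together with $\{g_1, \dots, g_h\}$ is an $H$-factor of $G[A \cup S]$. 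To produce such gadgets I would use that $\alpha_H^*$ does not increase when passing to an induced subgraph: thus every $X \subseteq V(G)$ with $|X| \ge h\eps' n$ satisfies $\alpha_H^*(G[X]) \le \eps' n$ and is therefore ``$H$-rich'' — all but at most $h\eps' n$ of its vertices lie in copies of $H$ inside $X$, and $X$ in fact contains $\Omega(n)$ pairwise-disjoint such copies. Applying this inside the neighbourhoods $N(s_i)$ (each of size $\ge \eps n$, and $\eps' \ll \eps/h$ is chosen precisely so that the definition of $\alpha_H^*$ can be invoked within a single $N(s_i)$) yields the copies $F_i$, while a further application to disjoint $\eps' n$-subsets of $N(s_1), \dots, N(s_h)$ yields the transversal copy $\{g_1, \dots, g_h\}$. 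Finally one merges these $S$-absorbers, over all $h$-sets $S$, into one set $M$ with $|M| \le h\eps' n$ that is simultaneously $S$-absorbing for every $S$, by a standard random-sparsification argument (select gadgets independently with a small probability, delete one from each intersecting pair, union bound over the $\binom{n}{h}$ choices of $S$), provided each $S$ has enough almost-disjoint absorbers; to absorb an admissible $U$, assign distinct surviving absorbers to the $h$-sets of an arbitrary partition of $U$ and use their ``$A \cup S$'' factors, together with the ``$A$'' factors of all unused gadgets.

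The step I expect to be the main obstacle is precisely \emph{quantifying} the supply of absorbers under the very weak hypothesis $\delta(G) \ge \eps n$. Unlike in minimum-degree versions of the Hajnal--Szemer\'edi theorem, $G$ here need carry no global density, so one cannot find copies of $H$, or link the pieces of a gadget, for free: every such step must be driven by $\alpha_H^*$ applied inside a linear-sized neighbourhood, which is exactly why the conclusion forces $\eps' \ll \eps$. In particular, for the random-sparsification assembly to go through one wants $\Omega(n^{|A|})$ absorbers per $S$, whereas $\alpha_H^*$ on its own only guarantees $\Omega(n)$ disjoint copies of $H$ inside a neighbourhood; upgrading this to the required supersaturation is where the two hypotheses must genuinely be combined (for $H = K_2$ it reduces to the edge count $\ge \eps n^2/2$, but for general $H$ it needs more, e.g.\ an induction on $H$ or a closer look at the neighbourhood structure). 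A secondary, purely technical point is that for an arbitrary $H$ the gadget must be set up around genuine copies of $H$ with a role-respecting vertex-swap between each $s_i$ and $g_i$, all while keeping every relevant set divisible by $h$; this is bookkeeping rather than a real difficulty.
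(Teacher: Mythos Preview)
Your almost-cover and your $(S,h)$-absorber gadget are correct and coincide with the paper's: for each $w\in S$ take disjoint linear-size $N_w\subseteq N_G(w)$, greedily extract $\Omega(n)$ disjoint copies of $H$ in each $N_w$ using $\alpha_H^*\le\eps'n$, pick one vertex from each into a set $V_w$, and note that any copy of $H$ transversal to the $V_w$'s, together with the $h$ copies it meets, is an absorber (the ``role-respecting swap'' you worry about is automatic, since $w$ is complete to $F_w$). But this yields only $\gamma n$ \emph{pairwise-disjoint} absorbers per $S$, and the gap you flag is real and not repairable along the lines you suggest. The R\"odl--Ruci\'nski--Szemer\'edi random sparsification needs $\Omega(n^{h^2})$ absorbers of size $h^2$ for each $S$, and this supersaturation can simply fail under the hypotheses. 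For $H=K_3$, take $G$ to be the edge-union of $K_{n/2,n/2}$ and $G(n,Cn^{-2/3})$: then $\delta(G)\ge n/2$ and, for large $C$, with high probability $\alpha_{K_3}^*(G)\le\eps'n$, yet for any vertex $s$ the graph $G[N(s)]$ contains only $O(n^{5/3})$ triangles, so every $3$-set $S$ has at most $O(n^5)$ absorbers of your shape rather than the $\Omega(n^9)$ the sparsification requires. No ``induction on $H$'' or ``closer look at the neighbourhood structure'' will recover the missing polynomial factor.

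The paper sidesteps supersaturation entirely by proving a different absorbing lemma that needs only $\gamma n$ pairwise-disjoint $(S,t)$-absorbers per $S$ --- exactly what the gadget supplies. The assembly uses Montgomery's bipartite template: a bounded-degree bipartite graph $B_m$ with parts $X_m\cup Y_m$ and $Z_m$, where $|X_m|=(1+\beta)m$, such that $B_m[X'\cup Y_m,Z_m]$ has a perfect matching for \emph{every} $m$-subset $X'\subseteq X_m$. One identifies $X_m\cup Y_m$ with a linear-size reservoir in $G$, maps each $z\in Z_m$ to a disjoint $(h{-}1)$-set, and then \emph{greedily} attaches one $(S,t)$-absorber to each edge of $B_m$; since $B_m$ has only $O(m)$ edges and there are $\gamma n$ disjoint absorbers available for each, this succeeds without any counting. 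Absorbing a leftover $U$ then amounts to matching each $u\in U$ into the reservoir (removing $\beta m$ vertices from $X_m$) and reading off the perfect matching in $B_m$. This template device is the one ingredient your outline is missing; with it in hand, the rest of your plan goes through unchanged.
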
 

Let us mention a connection between Theorem \ref{thm:main2} and a recent result of Balogh, Treglown, and Wagner \cite{balogh17perturbed} on $H$-factors in dense \emph{randomly perturbed graphs}. It was shown in \cite{balogh17perturbed} that if $p \ge C n^{-1/d(H)}$, where 
$$
  d(H) = \max\left\{ \frac{e(H')}{v(H') - 1} \mid H' \subseteq H, v(H') \ge 2\right\},
$$
and $G$ is a graph with $n$ vertices and $\delta(G) \ge \eps n$, then with high probability $G \cup G(n,p)$ contains an $H$-factor. Here $G(n, p)$ denotes the binomial random graph with $n$ vertices and edge probability $p$. Theorem \ref{thm:main2} implies a stronger version of this result: with high probability $G' \sim G(n,p)$ has the property that for \emph{every} graph $G$ with $\delta(G) \ge \eps n$ we have that $G \cup G'$ contains an $H$-factor (the result from \cite{balogh17perturbed} is for one fixed $G$). This follows from the fact that with high probability $\alpha_H^*(G(n,p)) < \eps' n$ for arbitrarily small $\eps'$ (which requires sufficiently large $C = C(\eps')$ in $p$). Advantages over the proof from \cite{balogh17perturbed} is that the proof of Theorem \ref{thm:main2} is significantly shorter, simpler, and avoids the use of the Regularity Lemma.

\subsection{Proof strategy}

The proofs of Theorem \ref{thm:main1} and Theorem \ref{thm:main2} use the \emph{absorbing method}. This general method was pioneered by R\"odl, Ruci\'nski and Szemer\'edi \cite{rodl2009perfect}, though it can be traced further back to Erd\H{o}s, Gy\'arf\'as, and Pyber \cite{erdHos1991vertex} and Krivelevich \cite{krivelevich1997triangle}. In our case the construction of absorbers, the heart of the proof, is based on ideas of Montgomery \cite{montgomery2014embedding}. The first author has learned this construction from discussions with Asaf Ferber. These ideas were used in a similar context, for example, by Kwan \cite{kwan2016almost} to show that a typical Steiner triple system contains a perfect matching. 

The main idea behind the absorbing method is, given a fixed graph $H$ on $h$ vertices, to find a subset $A \subseteq V(G)$, called an \emph{absorber}, such that for every $R \subseteq V(G) \setminus A$ such that $h$ divides $|R| + |A|$ and $|R| \le \xi n$, for some small $\xi > 0$, the induced subgraph $G[A \cup R]$ contains an $H$-factor. This reduces the problem to finding an $H$-tiling in $G \setminus A$ which covers all but at most $\xi n$ vertices, which is usually much simpler. 

Widely used constructions of absorbers by R\"odl, Ruci\'nski and Szemer\'edi \cite{rodl2009perfect} and H{\`a}n, Person, and Schacht \cite{han2009perfect} rely on the property of $G$ that for every subset $S \subseteq V(G)$ of size $v(H)$ there are $\Omega(n^{v(H) t})$ \emph{$S$-absorbers} of size $v(H) t$ for some $t \in \mathbb{N}$, that is, subsets $A_S \subseteq V(G) \setminus S$ of size $|A_S| = v(H) t$ such that both $G[A_S]$ and $G[A_S \cup S]$ contain an $H$-factor. In many problems on finding $H$-factors such a property indeed holds (e.g.\ see \cite{han2017exact,lo2015f,treglown2012exact}). However, as pointed out in \cite{balogh2016}, the conditions on $G$ in the case $r = 3$ and $\ell = 2$ in Theorem \ref{thm:main1} are not strong enough to guarantee this property. Consequently, the absorber construction from \cite{han2009perfect,rodl2009perfect} fails. Moreover, this turns out to be the case for an arbitrary $r$ and $\ell$: Consider a graph $G$ obtained by taking an $r$-partite complete graph with vertex classes $V_1, \ldots, V_r$ and in each $V_i$ place a graph $\Gamma$  with $|V_i|$ vertices such that $\alpha_\ell(\Gamma) = o(n)$ and $\Delta(\Gamma) = o(n)$ (see the proof of Proposition \ref{claim:lower} for the existence of such graphs). Take an arbitrary independent set $S \subseteq V_1$ of size $r$ and consider some fixed $t \in \mathbb{N}$. Any $S$-absorber $A_S \subseteq V(G) \setminus S$ of size $|A_S| = rt$ which does not contain edges of $\Gamma$ needs to intersect each $V_i$ equally. However, any $K_r$-tiling of $A_S \cup S$ has to be traversing (that is, each copy of $K_r$ contains exactly one vertex from each $V_i$), which leaves at least $r$ vertices of $(A_S\cup S)\cap V_1$ unmatched. Therefore, $A_S$ needs to contain an edge from some $V_i$, which implies an upper bound of order $o(n^{rt})$ on the number of such sets. To summarise, as soon as $\delta(G) \le (r - 1)n/r$, we cannot use constructions from \cite{han2009perfect,rodl2009perfect}.

We show that a much weaker property, namely that for every $v(H)$-subset $S$ there exists a family of $\Theta(n)$ pairwise disjoint $S$-absorbers, suffices (see Lemma \ref{lemma:absorbing}).

\paragraph{Notation.} We follow the standard graph-theoretic notation (see \cite{diestel}). In particular, given a graph $G$ and a vertex $v \in G$, we denote with $N_G(v)$ the set of neighbours of $v$ in $G$. Given a set $V$ and an integer $k \in \NN$, we denote with $\binom{V}{k}$ the family of all $k$-subsets of $V$. Throughout the paper we assume that $n$ is sufficiently large and, for brevity, avoid the use of floors and ceilings.

\section{The Absorbing Lemma} \label{sec:proof_strategy}

\begin{definition}
  Let $H$ be a graph with $h$ vertices and let $G$ be a graph with $n$ vertices.
  \begin{itemize}
    \item We say that a subset $A \subseteq V(G)$ is \emph{$\xi$-absorbing} for some $\xi > 0$ if for every subset $R \subseteq V(G) \setminus A$ such that $h$ divides $|A| + |R|$ and $|R| \le \xi n$ the induced subgraph $G[A \cup R]$ contains an $H$-factor.

    \item Given a subset $S \subseteq V(G)$ of size $h$ and an integer $t \in \mathbb{N}$, we say that a subset $A_S \subseteq V(G) \setminus S$ is \emph{$(S, t)$-absorbing} if $|A_S| = h t$ and both $G[A_S]$ and $G[A_S \cup S]$ contain an $H$-factor.
  \end{itemize}
\end{definition}

The following lemma gives a sufficient condition for the existence of $\xi$-absorbers based on abundance of disjoint $(S, t)$-absorbers.

\begin{lemma} \label{lemma:absorbing}
    Let $H$ be a graph with $h$ vertices and let $\gamma > 0$ and $t \in \NN$ be constants. Suppose that $G$ is a graph with $n \ge n_0$ vertices such that for every $S \in \binom{V(G)}{h}$ there is a family of at least $\gamma n$ vertex-disjoint $(S, t)$-absorbers. Then $G$ contains an $\xi$-absorbing set of size at most $\gamma n$, for some $\xi = \xi(h, t, \gamma)$.
\end{lemma}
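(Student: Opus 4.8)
The goal is to build a single set $A$ of size at most $\gamma n$ such that $G[A\cup R]$ has an $H$-factor for every small $R$ with the right divisibility. The plan is to use the standard absorber-gadget + random-sampling strategy, but leveraging only the weak hypothesis that every $h$-set $S$ has $\gamma n$ \emph{vertex-disjoint} $(S,t)$-absorbers (rather than $\Omega(n^{ht})$ of them, which would make the classical argument immediate). The key point to extract from this weak hypothesis is a robustness statement: even after deleting a small ($o(n)$-sized) set of vertices, every $S$ still has \emph{one} $(S,t)$-absorber avoiding the deleted set --- indeed $\gamma n$ of them minus at most $o(n)$ that are hit leaves $\Omega(n)$ survivors.

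**The absorber structure.** First I would fix, for each $S\in\binom{V(G)}{h}$, its family $\mathcal F_S$ of $\gamma n$ disjoint $(S,t)$-absorbers. I would then take a random subset $A_0\subseteq V(G)$ by including each vertex independently with probability $q=\Theta(\gamma/(ht))$ (or sample a random collection of absorbers directly). Using a first-moment / concentration argument (Chernoff), with positive probability $A_0$ satisfies: (i) $|A_0|\le \gamma n/2$; and (ii) for every $S$, the number of absorbers in $\mathcal F_S$ lying entirely inside $A_0$ is at least some $\beta n$. Crucially, since the absorbers in $\mathcal F_S$ are disjoint, the events "absorber $i$ of $\mathcal F_S$ is contained in $A_0$" are independent, so $\mathrm{Bin}(\gamma n, q^{ht})$-type concentration applies and a union bound over the at most $n^h$ choices of $S$ goes through. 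Then I would discard from $A_0$ the few vertices that lie in "too many" distinct families' absorbers or otherwise cause overlaps, and greedily extract a sub-collection of absorbers that are pairwise disjoint across different $S$'s while keeping $\Omega(n)$ per $S$; set $A$ to be the union of all chosen absorbers (plus possibly a small number of extra vertices to fix $|A|$ mod $h$). Since each absorber has the fixed size $ht$, we get $|A|\le \gamma n$.

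**Absorbing a small remainder $R$.** Given $R\subseteq V(G)\setminus A$ with $|R|\le \xi n$ (for $\xi$ chosen much smaller than $\beta$) and $h \mid |A|+|R|$, I would absorb $R$ vertex by vertex in groups of $h$: partition $R$ arbitrarily into $h$-sets $S_1,\dots,S_m$ (possible since $h\mid|R|$, using $h\mid|A|$). Process them one at a time; when processing $S_j$, I need an $(S_j,t)$-absorber inside the part of $A$ not yet used. Since $A$ contains $\ge \beta n$ disjoint $(S_j,t)$-absorbers and we have so far consumed at most $\xi n \cdot (\text{something})$ of them, as long as $\xi$ is small enough at least one remains; use it. "Using" absorber $A_{S_j}$ means: replace the $H$-factor of $G[A_{S_j}]$ by the $H$-factor of $G[A_{S_j}\cup S_j]$, which covers $S_j$ and all of $A_{S_j}$. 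After all $S_j$ are absorbed, every vertex of $A\cup R$ is covered: the used absorbers cover $R$ and themselves via the "$\cup S$" factor, and each unused absorber in $A$ is covered by its own internal $H$-factor $G[A_{S}]$ --- but I need these internal factors to be mutually consistent, so I should from the start fix one global $H$-factor of $G[A]$ obtained by taking the union of the internal $H$-factors of all the chosen (pairwise disjoint) absorbers, and only locally modify it on the absorbers we activate.

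**Main obstacle.** The delicate part is \textbf{bookkeeping the disjointness across different $h$-sets $S$}: the same vertex can appear in absorbers belonging to families $\mathcal F_S$ for many different $S$, so after random sampling I must prune to a genuinely laminar/disjoint sub-structure while still guaranteeing that for \emph{every} one of the $\binom{n}{h}$ sets $S$, enough of its absorbers survive the pruning. The clean way is to bound, for a random $A_0$, the expected number of "conflicts" and show it is $o(n)$ per family, then remove one absorber from each conflicting pair; a union bound over all $S$ (there are only $n^{O(h)}$, and $t,h,\gamma$ are constants) closes it. I would also need to double-check the off-by-$h$ divisibility fix at the end (adding a bounded number of dummy vertices to $A$, or insisting $h\mid|A|$ by construction), but that is routine.
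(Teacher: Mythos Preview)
Your pruning step cannot work, and this is exactly the obstacle that forces the paper to use a different construction. After sampling $A_0$, you want to extract a pairwise-disjoint family of absorbers so that $G[A]$ carries a global $H$-factor, \emph{while simultaneously} retaining $\Omega(n)$ absorbers for every one of the $\binom{n}{h}$ sets $S$. But $|A|\le \gamma n$ contains at most $\gamma n/(ht)$ disjoint $(ht)$-blocks, and the hypothesis gives no reason for any single block to lie in $\mathcal F_S$ for more than $O(1)$ different $S$; so the total ``coverage'' you can achieve is $O(n)$, not the $\Omega(n)\cdot\binom{n}{h}$ you need. Equivalently, your conflict count is not $o(n)$ per family: a surviving absorber $B\in\mathcal F_S$ shares vertices with $\Theta(n)$ surviving absorbers from almost every other family $\mathcal F_{S'}$, so removing one absorber per conflicting pair wipes out essentially all of them. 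Your switching step also implicitly requires each activated $A_{S_j}$ to be a union of $t$ copies from the fixed $H$-factor of $A$, which the surviving members of $\mathcal F_{S_j}$ have no reason to be. In short, what you have sketched is the R\"odl--Ruci\'nski--Szemer\'edi template, which genuinely needs $\Omega(n^{ht})$ absorbers per $S$; with only $\gamma n$ disjoint ones, it breaks.

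The paper avoids this by never trying to handle all $\binom{n}{h}$ sets at once. Instead it (i) absorbs $R$ \emph{one vertex at a time} into a random buffer $X$, using copies of $H$ through each vertex, so the only flexibility needed is in which subset $X'\subseteq X$ of size $|X|-\Theta(|R|)$ remains; and (ii) encodes this flexibility via Montgomery's robust bipartite graph $B_m$ (Lemma~\ref{lemma:blueprint}): one places a single $(S_e,t)$-absorber for each of the $O(m)$ \emph{edges} $e$ of $B_m$, chosen greedily and disjointly, and the perfect-matching property of $B_m$ then selects which absorbers to ``switch on'' for any admissible $X'$. Thus only $O(n)$ absorbers are ever needed, each for a single prescribed $h$-set, and the greedy selection goes through precisely because of the $\gamma n$-disjoint-absorbers hypothesis. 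This template idea is the missing ingredient in your argument.
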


The proof of Lemma \ref{lemma:absorbing} is based on ideas of Montgomery \cite{montgomery2014embedding} and relies on the existence of `robust' sparse bipartite graphs given by the following lemma. 

\begin{lemma} \label{lemma:blueprint}
	Let $\beta > 0$. Then for every $m \ge m_0$ there exists a bipartite graph $B_m$ with vertex classes $X_m \cup Y_m$ and $Z_m$ and maximum degree $40$, such that $|X_m|= m + \beta m$, $|Y_m| = 2m$ and $|Z_m| = 3m$, and for every subset $X'_m \subseteq X_m$ of size $|X'_m| = m$ the induced graph $B[X'_m \cup Y_m, Z_m]$ contains a perfect matching.
\end{lemma}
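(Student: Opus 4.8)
The plan is to construct $B_m$ via a random process and verify the required matching property using Hall's theorem together with a union bound over all "bad" configurations. Concretely, I would take $Z_m$ to be a set of $3m$ vertices and build the bipartite graph between $X_m \cup Y_m$ (of size $m + \beta m + 2m = 3m + \beta m$) and $Z_m$ by letting each vertex of $Z_m$ choose its neighbours independently: say each $z \in Z_m$ picks a bounded number $d$ of neighbours uniformly at random in $X_m \cup Y_m$ (with $d$ a constant depending only on $\beta$, to be fixed later), and symmetrically one checks the degrees on the other side concentrate below $40$, discarding the rare bad event or trimming excess edges. The point of the constant $\beta m$ extra vertices in $X_m$ is exactly to create the slack needed so that deleting any $\beta m$ of them still leaves a graph with a perfect matching into $Z_m$; this is the standard "reservoir" trick behind Montgomery's switching/absorber constructions.

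The core of the argument is the following: I would show that with positive probability (in fact with probability $1 - o(1)$), the random bipartite graph $B_m$ on $(X_m \cup Y_m, Z_m)$ has the property that \emph{for every} $X'_m \subseteq X_m$ with $|X'_m| = m$, the induced graph on $(X'_m \cup Y_m, Z_m)$ satisfies Hall's condition from the $Z_m$ side — i.e.\ every $W \subseteq Z_m$ has $|N(W) \cap (X'_m \cup Y_m)| \ge |W|$ — and hence (since both sides have size $3m$) contains a perfect matching. Since there are at most $\binom{|X_m|}{m} \le 2^{3m + \beta m}$ choices of $X'_m$ and at most $2^{3m}$ choices of $W$, a union bound requires that the probability a fixed pair $(X'_m, W)$ violates Hall's condition be smaller than roughly $2^{-7m}$ or so. For a fixed $W$ of size $w$ with small neighbourhood, one bounds the probability that all $d w$ random edges emanating from $W$ land in a fixed small target set; the usual way to make this work is to distinguish the regime of small $w$ (where the expansion $|N(W)| \ge (1+c)|W|$ holds for an expansion constant $c$ depending on $d$) from the regime of large $w$ close to $3m$ (where one instead argues from the side of $X'_m \cup Y_m$, using that its non-neighbourhood in $Z_m$ is small). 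Choosing $d$ large enough as a function of $\beta$ drives all these failure probabilities below the union-bound threshold.

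The main obstacle I expect is the careful bookkeeping in the large-$W$ regime: when $|W|$ is close to $3m$, one cannot afford expansion $(1+c)|W|$ since that would exceed $|X_m \cup Y_m|$, so instead one must phrase Hall's condition in the contrapositive — there is no $W$ with $|N(W)| < |W|$ iff there is no independent-ish "blocking" pair — and control, for every $X'_m$, the number of vertices of $Z_m$ whose entire neighbourhood avoids a given large subset of $X'_m \cup Y_m$. The delicate point is that the adversary gets to choose $X'_m$ \emph{after} seeing the graph, which is why the $\beta m$ slack and the union bound over all $\binom{|X_m|}{m}$ subsets are essential; making the constants fit (degree bound $40$, the specific sizes $m + \beta m$, $2m$, $3m$) is a matter of choosing $d$ and then absorbing lower-order terms, and I would not expect any conceptual difficulty there beyond getting the exponents to line up. Finally, once such a $B_m$ exists for all large $m$, the statement follows by fixing one such graph for each $m \ge m_0$.
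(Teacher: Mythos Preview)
The paper does not actually prove this lemma; it simply records it as a corollary of \cite[Lemma~2.8]{montgomery2014embedding}. So there is no in-paper argument to compare against beyond that citation. Your plan is a reasonable first attempt at reconstructing such a result, but the specific random model you propose has a genuine gap, and it is not in the large-$W$ regime you flag as delicate but already at $|W|=1$.

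If each $z\in Z_m$ chooses $d\le 40$ neighbours uniformly at random in $X_m\cup Y_m$, then the probability that a fixed $z$ has \emph{all} of its neighbours in $X_m$ is roughly $\bigl(|X_m|/(|X_m|+|Y_m|)\bigr)^{d}\approx\bigl((1+\beta)/(3+\beta)\bigr)^{d}$, a positive constant independent of $m$. Hence the expected number of such vertices in $Z_m$ is $\Theta(m)$, and with high probability at least one such $z$ exists. For any such $z$ the adversary simply chooses $X_m\setminus X'_m$ to contain $N(z)$ (possible since $|N(z)|\le 40\le\beta m$ for large $m$), and then $z$ is isolated in $B[X'_m\cup Y_m,Z_m]$, so no perfect matching exists. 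In your own union-bound bookkeeping this shows up as follows: for $W=\{z\}$ the failure probability over a fixed bad pair $(X'_m,W)$ is a constant in $m$, not $2^{-7m}$, so the union bound cannot close no matter how you tune $d\le 40$. This is a structural obstruction to any bounded-degree construction in which the edges out of $Z_m$ are distributed uniformly over $X_m\cup Y_m$; trimming or discarding high-degree vertices does not help.

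The fix, which is essentially what underlies Montgomery's construction, is to build in by design that every $z\in Z_m$ retains neighbours after the adversary acts on $X_m$: for instance, generate the edges between $Z_m$ and $Y_m$ and the edges between $Z_m$ and $X_m$ by two separate (random or structured) processes, so that each $z$ is guaranteed a prescribed number of neighbours in $Y_m$. Once that is in place, a Hall-plus-union-bound verification along the lines you sketch (splitting into small and large $W$, and for large $W$ switching to the complementary side) does go through; but the construction has to be set up with this asymmetry from the start, and your current proposal does not do so.
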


Lemma \ref{lemma:blueprint} is a corollary of \cite[Lemma 2.8]{montgomery2014embedding}. We are now ready to prove Lemma \ref{lemma:absorbing}.

\begin{proof}[Proof of Lemma \ref{lemma:absorbing}]
  From the assumption that for every $S \in \binom{V(G)}{h}$ there are $\gamma n$ disjoint $(S, t)$-absorbers we conclude that for every vertex $v \in V(G)$ there is a family of at least $\gamma n$ copies of $H$ which contain $v$ and are otherwise disjoint. Let us denote the family of sets of vertices of each such copy (without the vertex $v$) by $\mathcal{H}_v$.

  Choose a subset $X \subseteq V(G)$ by including each vertex of $G$ with probability $q = \gamma / (500 h t)$. The parameter $q$ is chosen such that the calculations work and for now it is enough to remember that $q$ is a sufficiently small constant. A simple application of Chernoff's inequality and a union bound show that with positive probability $|X| \le 2nq$ and for each vertex $v \in V(G)$ at least $q^{h - 1} |\mathcal{H}_v| / 2$ sets from $\mathcal{H}_v$ are contained in $X$. Therefore, there exists one such $X$ for which these properties hold. Let us denote the family of sets from $\mathcal{H}_v$ completely contained in $X$ with $\mathcal{H}_v'$. 

  Set $\beta = q^{h-1} \gamma / 4$ and $m = |X| / (1 + \beta)$. Let $B_m$ be a graph given by Lemma \ref{lemma:blueprint}. Choose disjoint subsets $Y, Z \subseteq V(G) \setminus X$ of size $|Y| = 2m$ and $|Z| = 3m(h-1)$ and arbitrarily partition $Z$ into subsets $\calZ = \{Z_i\}_{i \in [3m]}$ of size $h-1$. Take any injective mapping $\phi_1 \colon X_m \cup Y_m \rightarrow X \cup Y$ such that $\phi_1(X_m) = X$, and any injective $\phi_2 \colon Z_m \rightarrow \calZ$. We claim that there exists a family $\{A_e\}_{e \in B_m}$ of pairwise disjoint $(ht)$-subsets  of $V(G) \setminus (X \cup Y \cup Z)$ such that for each $e = \{w_1, w_2\} \in B_m$, where $w_1 \in X_m \cup Y_m$ and $w_2 \in Z_m$, the set $A_e$ is $(\phi_1(w_1) \cup \phi_2(w_2), t)$-absorbing. 

  Such a family can be chosen greedily. Suppose we have already found desired subsets for all the edges in some $E' \subset B_m$. These sets, together with $X \cup Y \cup Z$, occupy at most
  \begin{align*}
    |X| + |Y| + |Z| + h t |E'| &< 4m + 3m(h-1) + ht \cdot 40 |Z_m| 
    \\ &\le 4 hm + 120 h t m \le 124 htm < 240 ht nq \le \gamma n / 2
  \end{align*}
  vertices in $G$. Choose arbitrary $e = \{w_1, w_2\} \in B_m \setminus E'$. As there are $\gamma n$ disjoint $(\phi_1(w_1) \cup \phi_2(w_2),t)$-absorbers, there are at least $\gamma n / 2$ ones which do not contain any of the previously used vertices. Pick any and proceed.

  We claim that 
  $$
    A = X \cup Y \cup Z \cup \left( \bigcup_{e \in B_m} A_e \right)
  $$
  has the $\xi$-absorbing property for $\xi = \beta / (h-1)$. Consider some subset $R \subseteq V(G) \setminus A$ such that $|R| + |A| \in h \mathbb{Z}$ and $|R| \le \xi n$. As 
  $$
    |\mathcal{H}_v'| \ge q^{ht} \gamma n / 2 \ge 2 \beta n = 2 \xi n (h-1) \ge 2|R| (h-1)
  $$
  we can greedily choose a subset $A_v \in \mathcal{H}_v'$ for each $v \in R$ such that all these sets are pairwise disjoint (recall each set in $\calH_v'$ is of size $h - 1$ and forms a copy of $H$ with $v$). This takes care of vertices from $R$ and uses exactly $|R|(h-1) \le \beta m$ vertices from $X$. If $|R|(h-1) < \beta m$ then $|A| + |R| \in h \mathbb{Z}$ implies $\beta m - |R|(h-1) \in h \mathbb{Z}$, thus we can cover the remaining vertices from $X$ with disjoint copies of $H$ such that there are exactly $m$ vertices remaining. Again, $|\mathcal{H}_v'| \ge 2 \beta n > 2 \beta m$ implies that such copies of $H$ can be found in a greedy manner. 

  Let $X'$ denote the remaining vertices from $X$ and set $X_m' = \phi_1^{-1}(X')$. By Lemma \ref{lemma:blueprint} there exists a perfect matching $M$ in $B_m$ between $X_m' \cup Y_m$ and $Z_m$. For each edge $e = \{w_1, w_2\} \in M$ take an $H$-factor in $G[\phi_1(w_1) \cup \phi_2(w_2) \cup A_e]$ and for each $e \in B_m \setminus M$ take an $H$-factor in $G[A_e]$. All together, this gives an $H$-factor of $G[A \cup R]$. 
\end{proof}

\section{Proofs of main results}

The proof of Theorem \ref{thm:main2} is somewhat easier thus we use it to demonstrate an application of Lemma \ref{lemma:absorbing}.

\begin{proof}[Proof of Theorem \ref{thm:main2}]  
 Let $\gamma = \eps / 8h^2$ and $\xi = \xi(h, \gamma, t)$ be as given by Lemma \ref{lemma:absorbing}, and set $\eps' =  \min \{\xi / h, \gamma\}$.

	The proof consists of two parts: (i) show that $G$ contains an $\xi$-absorbing set $A \subseteq V(G)$, and (ii) cover all but at most $\xi n$ vertices in $V(G) \setminus A$ using vertex-disjoint copies of $H$. Note that part (ii) is almost trivial: greedily pick disjoint copies of $H$ in $V(G) \setminus A$ as long as possible. The resulting set is $H$-free thus it has to be smaller than $h \alpha_H^*(G) \le \xi n$. 

  With Lemma \ref{lemma:absorbing} at hand, to prove part (i) it suffices to show that for an arbitrary 
  $S \in \binom{V(G)}{h}$ there is a family of $\gamma n$ vertex-disjoint $(S, t)$-absorbers for some $t\in \NN$. In this case we take $t=h$. First, for each $w \in S$ choose a subset $N_w \subseteq N_G(w) \setminus S$ of size $\eps n / (2h)$ such that all these sets are pairwise disjoint. From $h \alpha_H^*(G) < \eps n / 4h$ we have that $G[N_w]$ contains a family of $\eps n / 4h^2$ disjoint copies of $H$. Let $V_w$ be a set containing one vertex from each such copy of $H$. Each copy of $H$ which traverses all $V_w$'s, that is, it contains a vertex from each $V_w$ for $w \in S$, forms an $(S, h)$-absorber with the corresponding copies of $H$ from $N_w$'s. Greedily pick such disjoint traversing copies of $H$ one after the other, again as long as possible. As long as we have at least $|V_w|/2 > \alpha_H^*(G)$ unused vertices in each $V_w$, that is we have found less than $|V_w|/2$ traversing copies of $H$ so far, the process continues. This way we construct a family of at least $|V_w| / 2 \ge \gamma n$ disjoint $(S, h)$-absorbers, which finishes the proof.
\end{proof}

The proof of Theorem \ref{thm:main1} follows along the same lines as the proof of Theorem \ref{thm:main2}, though it it slightly more technically involved. In particular, the covering part does not come for free, thus we handle it with with the following lemma.

\begin{lemma} \label{lemma:cover}
  Let $r > \ell \ge 2$ be integers. For any $\eps, \xi > 0$ there exists $\eps' > 0$ such that if $G$ is a graph with $n \ge n_0$ vertices, $\delta(G) \ge \frac{(r - \ell)n}{r - \ell + 1} + \eps n$ and $\alpha_\ell(G) \le \eps' n$, then for every $A \subseteq V(G)$ of size at most $\eps n / 2$, there exists a $K_r$-tiling in $G \setminus A$ which covers all but at most $\xi n$ vertices.
\end{lemma}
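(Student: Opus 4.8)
The plan is to greedily pull out copies of $K_r$ until the uncovered set $U = V(G) \setminus A$ becomes small, but a naive greedy argument fails because $\delta(G)$ is too small to guarantee a $K_r$ in every large induced subgraph. The key realization is that the degree condition $\delta(G) \ge \frac{(r-\ell)n}{r-\ell+1} + \eps n$ is exactly the Hajnal--Szemer\'edi threshold for a $K_{r-\ell+1}$-factor (up to the additive slack), so one should first find many disjoint copies of $K_{r-\ell+1}$, and then \emph{extend} each such copy to a copy of $K_r$ by appending $\ell-1$ more vertices found using the $\alpha_\ell$ condition. First I would fix a copy $Q$ of $K_{r-\ell+1}$ with vertex set $\{q_1, \ldots, q_{r-\ell+1}\}$; since each $q_i$ has at least $\frac{(r-\ell)n}{r-\ell+1} + \eps n$ neighbours, the common neighbourhood $W = \bigcap_i N_G(q_i)$ has size at least $(r-\ell+1)\left(\frac{(r-\ell)n}{r-\ell+1} + \eps n\right) - (r-\ell)n = \eps(r-\ell+1)n$, which is linear in $n$. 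If $|W|$ exceeds $\alpha_\ell(G)$, then $G[W]$ contains a copy of $K_\ell$, and $Q$ together with this $K_\ell$ forms a copy of $K_r$ (it has $r-\ell+1 + \ell = r+1$... wait, $r - \ell + 1 + \ell = r + 1$; one must take $Q$ on $r - \ell$ vertices and a $K_\ell$ in the common neighbourhood, giving $r - \ell + \ell = r$ vertices). So the correct setup is: take a $K_{r-\ell}$-tiling — but $\delta(G)$ only gives a $K_{r-\ell+1}$-factor; since $K_{r-\ell} \subseteq K_{r-\ell+1}$, deleting one vertex from each tile of a $K_{r-\ell+1}$-factor yields a $K_{r-\ell}$-factor on all but a bounded fraction, which is more than enough.

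Concretely, I would proceed iteratively, maintaining the uncovered set $U$ (initially $V(G) \setminus A$, of size at least $n - \eps n/2$). At each step, as long as $|U| \ge \xi n$: restrict to $G[U]$, which still has minimum degree at least $\delta(G) - (n - |U|) \ge \frac{(r-\ell)|U|}{r-\ell+1} + \eps n - (\eps n/2 + \xi n)$; choosing $\xi$ and the constants appropriately this is at least $\frac{(r-\ell-1)|U|}{r-\ell}$, so by Hajnal--Szemer\'edi $G[U]$ contains a copy of $K_{r-\ell}$ — in fact a $K_{r-\ell}$-factor, but we only need one copy, or better, a \emph{linear-sized} $K_{r-\ell}$-tiling. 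For each such copy $Q$ on vertices $q_1, \ldots, q_{r-\ell}$, the common neighbourhood of $Q$ inside $U$ has size at least $|U| - (r-\ell)\cdot\frac{|U|}{r-\ell+1} + \Omega(n) = \frac{|U|}{r-\ell+1} + \Omega(n) = \Omega(n)$, which exceeds $\alpha_\ell(G) \le \eps' n$ once $\eps'$ is small enough; hence $G[N(Q) \cap U]$ contains a copy of $K_\ell$, and $Q$ plus this $K_\ell$ is a copy of $K_r$ in $G[U]$. Remove its $r$ vertices from $U$ and repeat. This removes $r$ vertices per step while $|U|$ stays above $\xi n$, so the process terminates with $|U| < \xi n + r \le \xi n$ (adjusting $\xi$ by a constant, or taking the threshold slightly above $\xi n$), as desired.

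To make the iteration efficient (avoiding $n/r$ separate Hajnal--Szemer\'edi invocations, though that would also be fine), I would instead extract a linear-sized batch at once: at the start, use Hajnal--Szemer\'edi on $G[V(G)\setminus A]$ to get a near-perfect $K_{r-\ell+1}$-tiling $\{T_1, \ldots, T_k\}$ with $k = \Omega(n)$, delete one vertex from each $T_i$ to get disjoint copies $Q_1, \ldots, Q_k$ of $K_{r-\ell}$, and then process them one by one: having used some vertices already, the common neighbourhood of $Q_i$ avoiding all previously used vertices still has size $\Omega(n) - O(\xi n + \eps n) = \Omega(n) > \alpha_\ell(G)$ provided $\xi$ was chosen small relative to $\eps$, so each $Q_i$ extends to a $K_r$; continue until the leftover is below $\xi n$. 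The main obstacle, and the place where the parameters must be tracked carefully, is the bookkeeping of minimum degree / common-neighbourhood sizes as vertices get deleted: one must verify that throughout the process the slack $\Omega(n)$ in the common neighbourhood survives the cumulative deletions, which forces $\xi \ll \eps$ and $\eps' \ll \min\{\xi, \eps\}$; this is routine but is the crux of choosing the constants in the right order.
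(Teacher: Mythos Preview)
Your argument has a genuine gap: the minimum-degree estimate you rely on throughout the greedy process is false once a linear number of vertices has been removed, and removing a linear number of vertices is unavoidable since you must drive $|U|$ from roughly $n$ down to $\xi n$.

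Concretely, your displayed inequality
\[
\delta(G)-(n-|U|)\ \ge\ \tfrac{(r-\ell)|U|}{r-\ell+1}+\eps n-(\eps n/2+\xi n)
\]
is equivalent to $n-|U|\le (r-\ell+1)(\eps/2+\xi)n$, i.e.\ it only holds while $|U|$ is still within $O(\eps+\xi)n$ of $n$. The correct bound is
\[
\delta(G)-(n-|U|)\ \ge\ \tfrac{(r-\ell)|U|}{r-\ell+1}+\eps n-\tfrac{n-|U|}{r-\ell+1},
\]
which drops below zero as soon as $|U|<\bigl(1-(r-\ell+1)\eps\bigr)n$. At that point vertices of $U$ may have no neighbours left in $U$, so neither the Hajnal--Szemer\'edi step nor the common-neighbourhood step is justified. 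A concrete obstruction for $r=3,\ \ell=2$: take $G$ on $V=A\cup B$ with $|A|=|B|=n/2$, $G[B]$ complete, $G[A,B]$ complete bipartite, and $G[A]$ a triangle-free graph with $\alpha_2=o(n)$. Then $\delta(G)\ge n/2$ and $\alpha_2(G)=o(n)$, but your procedure may first exhaust $B$ with internal triangles; once $U=A$, every $N(q)\cap U$ is an independent set in the triangle-free graph $G[A]$, so no further $K_3$ can be found even though $|U|=n/2$. Your batch variant has the same defect (the bound ``$\Omega(n)-O(\xi n+\eps n)$'' on the surviving common neighbourhood is unjustified once $\Theta(n)$ vertices have been consumed), compounded by the fact that deleting one vertex from each $K_{r-\ell+1}$-tile discards $\approx n/(r-\ell+1)$ vertices that are never re-covered.

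The paper circumvents this with the Regularity Lemma: it finds a $K_{r-\ell+1}$-factor in the \emph{reduced} graph, and inside each group of $r-\ell+1$ regular clusters it repeatedly extracts a transversal $K_{r-\ell}$ whose common neighbourhood in the remaining cluster is, by regularity, a fixed fraction of that cluster and hence exceeds $\alpha_\ell(G)$. Regularity is exactly what guarantees this common neighbourhood stays large even after the clusters have been depleted to a $\xi$-fraction of their original size---the step your greedy argument cannot supply.
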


The proof of Lemma \ref{lemma:cover} is fairly standard, however as it involves Szemer\'edi's regularity lemma we postpone it until the end of the section. We now show how together with Lemma \ref{lemma:absorbing} it implies Theorem \ref{thm:main1}.

\begin{proof}[Proof of Theorem \ref{thm:main1}]
  To prove Theorem \ref{thm:main1}, it suffices to show that for every subset $S \in \binom{V(G)}{r}$ there exists a family of $\gamma n$ vertex-disjoint $(S, t)$-absorbers for some $t\in \NN$. In particular, we do that for some $\gamma < \eps/2$ and $t=r$. Then by Lemma \ref{lemma:absorbing} there exists a $\xi$-absorbing subset $A \subseteq V(G)$ of size at most $\eps n / 2$ and by Lemma \ref{lemma:cover} there exists a $K_r$-tiling  in $G \setminus A$ covering all but at most $\xi n$ vertices. All together this implies the existence of a $K_r$-factor in $G$.

  Consider some $S \in \binom{V(G)}{r}$. Partition randomly $V(G) \setminus S$ into $r+1$ sets denoted by $V_1, \ldots, V_{r+1}$. Each $V_i$ is of size $(n - r)/(r+1)$ and by Chernoff's inequality and union bound, with high probability every vertex has at least 
  $$
    \left( \frac{r - \ell}{r - \ell + 1} + \eps/2 \right) \frac{n}{r+1}
  $$
  neighbours in each $V_i$. Therefore, there exists a partition $V_1, \ldots, V_{r+1}$ for which this holds. 

  Let us enumerate the vertices in $S$ as $v_1, \ldots, v_r$. We show that for every $X_i \subseteq V_i$ of size at most $\eps n / 4(r+1)$ there exists a copy of $K_{r}$ in $V_{r+1}$, with vertices labelled $w_1, \ldots, w_r$, and a copy of $K_{r-1}$ in $N_G(v_i) \cap N_G(w_i) \cap (V_i \setminus X_i)$ for every $i \in \{1, \ldots, r\}$. Note that such copies of $K_{r-1}$ together with the copy of $K_r$ in $V_{r+1}$ form an $(S, r)$-absorber. This allows us to greedily form a family of $\eps n / 4(r+1)r$ disjoint $(S, r)$-absorbers, which finishes the proof.

  The previous claim follows almost trivially from the bound on the minimum degree and $\alpha_\ell(G) \le \eps' n$. First, note that each vertex has at least
  $$
     \left( \frac{r - \ell}{r - \ell + 1} + \eps/4 \right) \frac{n}{r+1}
  $$
  neighbours in each $V_i \setminus X_i$. As $|V_i| < n/(r+1)$ this implies that any set of $r - \ell + 1$ vertices has a common neighbourhood of size at least $\eps n / 4(r+1)$ in $V_i \setminus X_i$. This means we can start with an arbitrary vertex $w_1 \in V_{r+1} \setminus X_{r+1}$ and iteratively for $2 \le i \le r - \ell$ pick a vertex $w_i$ which is in the neighbourhood of $w_1, \ldots, w_{i-1}$. Such vertices form $K_{r - \ell}$ and as $\alpha_\ell(G) \le \eps n / 4(r+1)$ there exists a copy of $K_\ell$ in their common neighbourhood in $V_{r+1} \setminus X_{r+1}$. This gives us a copy of $K_r$ in $V_{r+1} \setminus X_{r+1}$. Now for each $i \in [r]$  repeat a similar argument in order to find a copy of $K_{r-1}$ in $N_G(v_i) \cap N_G(w_i) \cap (V_i \setminus X_i)$. We omit the details.
\end{proof}

\subsection{Proof of Lemma \ref{lemma:cover}} 

The proof of Lemma \ref{lemma:cover} is based on a standard application of the regularity method (e.g. see \cite{komlos96}). We quote below some basic definitions and a statement of the Regularity Lemma.

\begin{definition}
  Given a graph $G$ and disjoint subsets $A, B \subseteq V(G)$, we say that the pair $(A, B)$ is \emph{$\mu$-regular} if for all $X\subseteq A$, $|X|\geq \mu |A|$ and $Y\subseteq B$, $|Y|\geq \mu |B|$ we have
  \[\left|d(X,Y)-d(A,B)\right|\leq \mu\]
  where $d(X,Y)=e(X,Y)/|X||Y|$ and $e(X, Y)$ is the number of edges in the bipartite subgraph of $G$ induced by $X$ and $Y$.
\end{definition}

The following version of the Regularity Lemma is known as the \emph{``degree form''} (see \cite[Theorem 1.10]{komlos96}).

\begin{lemma} \label{lem:rl}
  For every $\mu > 0$ there is an $N = N(\mu)$ such that if $G$ is a  graph with $n$ vertices and $d \in [0,1]$ is any real number, there exists a partition $V(G)=V_0\cup...\cup V_k$ and a spanning subgraph $G'\subseteq G$ with  the following properties:
  \begin{enumerate}[(a)]
    \item $k \leq N$;
    \item $|V_0| \leq \mu n$;
    \item $|V_i|=m$ for all $1\leq i\leq k$, where $m\leq \mu n$;
    \item $d_{G'}(v)>d_{G}(v)-(d+\mu)|G|$ for all $v\in V(G)$;
    \item all $V_i$ are independent sets in $G'$;
    \item all pairs $(V_i,V_j)$ are $\mu$-regular in $G'$ with density 0 or at least $d$.
  \end{enumerate}
\end{lemma}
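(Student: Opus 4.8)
\textbf{Plan for the proof of Lemma \ref{lemma:cover}.}
The plan is to feed $G$ into the degree-form Regularity Lemma (Lemma \ref{lem:rl}) with suitable parameters $\mu \ll \eps$ and $d = \eps/4$, obtaining a partition $V_0 \cup V_1 \cup \dots \cup V_k$ and a spanning subgraph $G' \subseteq G$. Absorbing the exceptional set $V_0$ and the set $A$ (of size at most $\eps n / 2$) into a small loss is the first bookkeeping step: after deleting $V_0 \cup A$ and the low-degree vertices, most clusters $V_i$ keep almost all of their vertices, and every surviving vertex still has, inside $G'$, at least $\bigl(\frac{r-\ell}{r-\ell+1} + \eps/2\bigr)n$ neighbours minus the $O((d+\mu)n + |A|)$ losses, so roughly $\bigl(\frac{r-\ell}{r-\ell+1} + \eps/4\bigr)n$ neighbours. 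Form the reduced graph $R$ on $[k]$ with $ij \in E(R)$ iff $(V_i,V_j)$ is $\mu$-regular of density at least $d$ in $G'$; a standard counting argument transfers the minimum-degree bound to $R$, giving $\delta(R) \ge \bigl(\frac{r-\ell}{r-\ell+1} + \eps/8\bigr)k$.

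The second step is to locate, inside each $K_{r-\ell+1}$ of $R$ (the existence of which follows from the Hajnal--Szemer\'edi / Tur\'an bound $\delta(R) \ge \frac{r-\ell}{r-\ell+1}k$), a ``blown-up'' copy of $K_r$. Here I would use the inheritance of $\alpha_\ell$: if $V_{i_1}, \dots, V_{i_{r-\ell+1}}$ span a clique in $R$, then by picking typical vertices one can find $K_{r-\ell+1}$-many common neighbourhoods shrinking by only a constant factor at each step, and crucially, inside the common neighbourhood of a partial $(r-\ell)$-clique the induced subgraph still has at most $\eps' n \le \alpha_\ell(G)$ as its $K_\ell$-free induced bound, hence contains $K_\ell$. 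This produces copies of $K_r$ in $G$ that use at most one vertex per cluster but whose supporting clusters come from a clique of size $r-\ell+1$ in $R$. The regularity of the pairs lets us do this not just once but for a positive fraction of vertex-tuples, so each such clique of $R$ yields many vertex-disjoint copies of $K_r$ in $G$, covering almost all vertices of the involved clusters.

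The third step is the tiling bookkeeping: apply the Hajnal--Szemer\'edi theorem to $R$ to get a $K_{r-\ell+1}$-factor of (almost all of) $R$, so the clusters are partitioned into groups of $r-\ell+1$, each group a clique in $R$; in each group greedily extract vertex-disjoint $K_r$'s as in the previous step until fewer than $\mu n$ (say) vertices remain uncovered in that group, which is possible precisely because regularity guarantees that as long as a linear fraction of each cluster survives we can keep finding the required common neighbourhoods and the required $K_\ell$ (using $\alpha_\ell(G) \le \eps' n$ with $\eps'$ chosen small relative to $\mu$, $d$, $k$). Summing the uncovered vertices across the at most $k/(r-\ell+1)$ groups, plus $|V_0| \le \mu n$, plus $|A| \le \eps n/2$ — wait, note $A$ is deleted rather than tiled, so it contributes nothing to the \emph{uncovered} count inside $G \setminus A$ — plus the at most $(r-\ell)m \le \mu n$ leftover clusters from the almost-factor of $R$, gives a total uncovered of at most $\xi n$ once $\mu$ is chosen small enough in terms of $\xi, k, r$.

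The main obstacle I expect is the second step: combining $\alpha_\ell$-inheritance with regularity to guarantee the copies of $K_r$ robustly, i.e.\ uniformly as clusters are being depleted. The subtlety is that the common neighbourhood of a partial clique, intersected with a cluster $V_j$, could shrink below $\mu |V_j|$ once many vertices have been removed, breaking regularity; the fix is to only ever aim to cover a $(1-\delta_0)$-fraction of each cluster for a constant $\delta_0$ chosen so that the relevant common neighbourhoods always stay of size $\ge 2\mu m$, and to absorb the remaining $\delta_0 m$ per cluster into the $\xi n$ slack. Verifying that the common-neighbourhood sizes stay linear — using $\delta(R)$, the density-$\ge d$ condition, $\mu$-regularity (via the standard fact that all but $\mu|V_i|$ vertices of $V_i$ have $\ge (d-\mu)|V_j|$ neighbours in any fixed linear subset of $V_j$), and $\alpha_\ell(G) \le \eps' n$ to extract the final $K_\ell$ — is the technical heart, and is exactly the ``standard application of the regularity method'' the lemma statement alludes to.
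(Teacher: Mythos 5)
Your proposal does not address the statement it was supposed to prove. The statement in question is Lemma \ref{lem:rl} itself, i.e.\ the \emph{degree form} of Szemer\'edi's Regularity Lemma, with conclusions (a)--(f). What you have written is a plan for deducing Lemma \ref{lemma:cover} \emph{from} Lemma \ref{lem:rl}: you invoke the degree form as a black box in your very first step and never establish any of its properties. Nothing in your sketch produces the partition $V_0 \cup V_1 \cup \dots \cup V_k$, the spanning subgraph $G'$, the degree bound $d_{G'}(v) > d_G(v) - (d+\mu)|G|$, or the dichotomy that every pair is $\mu$-regular with density $0$ or at least $d$. So as a proof of the stated lemma there is a complete gap: the statement is assumed rather than proven.

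For the record, the paper does not prove Lemma \ref{lem:rl} either; it quotes it from Koml\'os and Simonovits \cite[Theorem 1.10]{komlos96}. If one wanted a proof, the standard route is to apply the usual Regularity Lemma to get an equitable $\mu'$-regular partition (for a suitably smaller $\mu'$), and then obtain $G'$ by deleting all edges inside clusters, all edges in irregular pairs, all edges in pairs of density less than $d$, and all edges meeting the exceptional set, after which one checks that each vertex loses at most $(d+\mu)n$ incident edges and moves the few offending vertices into $V_0$. Your actual content --- the reduced-graph minimum degree transfer, the Hajnal--Szemer\'edi tiling of $R$, and the $\alpha_\ell$-based extraction of $K_r$'s inside cliques of $R$ --- is a reasonable outline of the paper's proof of Lemma \ref{lemma:cover}, but that is a different statement from the one under review.
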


We only give a sketch of the proof of Lemma \ref{lemma:cover} as it is a straightforward application of the regularity method.

\begin{proof}[Sketch of the proof of Lemma \ref{lemma:cover}] 

Let $\mu \ll \epsilon,\xi$, that is $\mu$ is sufficiently smaller than $\epsilon$ and $\xi$, and $d=\epsilon/4$. Apply the Regularity Lemma (Lemma \ref{lem:rl}) on $G$ with $\mu$ and $d$ to obtain a partition $V_0,...,V_k$ of $V(G)$ and a spanning subgraph $G'\subseteq G$ with the properties (a)--(f) as stated. Let $R$ be the \emph{reduced graph} of this partition: $R$ has vertex set $\{1, \ldots, k\}$ and there is an edge between $i$ and $j$ if and only if the pair $(V_i, V_j)$ has density at least $d$ in $G'$. Property (d) implies that $R$ has minimum degree at least
$$
  \left(\frac{r - \ell}{r - \ell + 1} + \eps/2 \right) k,
$$
thus by the Hajnal-Szemer\'edi theorem it contains a $K_{r-\ell+1}$-tiling which covers all but at most $r - \ell$ vertices (in case $k$ is not divisible by $r - \ell + 1$). For the rest of the proof we ignore $V_0$ and $V_i$'s for $i \in [k]$ which corresponds to vertices not covered by such a tiling. This way we ignore at most $\mu n + (r - \ell) \mu n < \xi n / 2$ vertices. 

Consider one of the copies of $K_{r - \ell + 1}$ in the obtained tiling in $R$. Without loss of generality we may assume that it corresponds to vertex classes $V_1, \ldots, V_{r - \ell + 1}$. We show that we can find a $K_r$-tiling in $G[V_1 \cup \ldots V_{r - \ell + 1}]$ which covers all but at most $\xi m / 2$ vertices in each $V_j$. Applying this to every copy of $K_{r - \ell + 1}$ from the tiling of $R$ we find a $K_r$-tiling of $G$ covering all but at most $\xi n$ vertices, as desired.

To show that there exists a $K_r$-tiling in $G[V_1 \cup \ldots V_{r - \ell + 1}]$ which covers all but at most $\xi m / 2$ vertices in each $V_j$, it suffices to show that for any $z \in [r - \ell + 1]$ and any choice of subsets $V_j' \subseteq V_j$ of size $|V_j'| \ge \xi m / 8$ for $j \in [r - \ell + 1]$, there exists a copy of $K_r$ in $G[V_1' \cup \ldots \cup V_{r - \ell + 1}']$ with exactly one vertex in each $[r - \ell + 1] \setminus z$ and $\ell$ vertices in $V_z'$. By repeatedly applying this $(1 - \xi/4) m / r$ times for each $z \in [r - \ell + 1]$, each time removing vertices from the obtained $K_r$, we obtain the desired $K_r$-tiling. 

Consider some subsets $V_j' \subseteq V_j$ for $j \in [r - \ell + 1]$ such that $|V_j'| \ge \xi m / 8$. By the Slicing Lemma (see \cite[Fact 1.5]{komlos96}) each pair $(V_i', V_j')$ is $\mu'$-regular for $\mu' = 8\mu / \xi$. Without loss of generality we may assume $z = r - \ell + 1$. Our goal is to find a vertex $w_j \in V_j'$ for each $1 \le j \le r - \ell$ such that these vertices form $K_{r - \ell}$ and their common neighbourhood $N_z \subseteq V_z'$ in $V_z'$ is of size at least 
$$
  |N_z| \ge (d/2)^{r - \ell} |V_z'| \ge (d/2)^{r - \ell} \xi m / 8 \ge (d/2)^{r - \ell} \xi n / 16N \ge \eps' n, 
$$
for sufficiently small $\eps'$ (recall that $N$ is a constant). The proof of this can be found, for example, in a textbook by Diestel \cite[Theorem 7.5.2]{diestel}, thus we omit it. Finally, as $\alpha_\ell(G) \le \eps' n$ we can find a copy of $K_\ell$ in $G[N_z]$ which completes a desired copy of $K_r$. 
\end{proof}

\section{Concluding remarks} \label{sec:remarks}


As far as we are aware, it is not known whether Theorem \ref{thm:main1} is optimal for $r - 1> \ell \ge 2$. In particular, a construction from \cite{balogh2016} shows that $\delta(G) \ge (r - 2)n / r$ is necessary for $\ell = 2$, while Theorem \ref{thm:main1} shows that $(r - 2)n/(r-1) + o(n)$ is sufficient. We now generalise their construction to arbitrary $r > \ell \ge 2$. 

\begin{proposition}\label{claim:lower}
If $r > \ell \ge 2$ then there exists $n_0\in \NN$ such that for every $n \ge n_0$ there is a graph $G$ on $n$ vertices such that
$$
  \delta(G) \ge 
  \begin{cases}
    \frac{r - \ell}{r}n, &\text{ if } \ell \le r/2 \\
    1/2, &\text{ otherwise},
  \end{cases}
$$
and $\alpha_\ell(G) = o(n)$ which does not contain a $K_r$-factor.
\end{proposition}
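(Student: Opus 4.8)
The plan is to exhibit, for each pair $r > \ell \ge 2$, one of two graphs and to report whichever has the larger minimum degree; the first will win exactly when $\ell \le r/2$ and the second when $\ell > r/2$. First note we may assume $r \mid n$: otherwise \emph{no} $n$-vertex graph admits a $K_r$-factor, and $G = K_n$ works since $\alpha_\ell(K_n) = \ell - 1 = o(n)$. Write $m = n/r$.

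\emph{Case $\ell > r/2$.} I would take $G$ to be the disjoint union of two cliques $Q_1, Q_2$ with $|Q_1| = r\lfloor m/2\rfloor + 1$ and $|Q_2| = n - |Q_1|$, so that $|Q_1| \equiv 1$ and $|Q_2| \equiv -1 \pmod r$ while both have order $n/2 + O(r)$. Since there are no edges between $Q_1$ and $Q_2$, every copy of $K_r$ lies inside a single $Q_i$, so a $K_r$-factor would split each $Q_i$ into $r$-cliques, impossible as $r \nmid |Q_i|$. Any induced subgraph on $2\ell - 1$ vertices has at least $\ell$ vertices in some $Q_i$, hence a $K_\ell$, so $\alpha_\ell(G) \le 2(\ell - 1) = o(n)$, while $\delta(G) = \min(|Q_1|, |Q_2|) - 1 \ge n/2 - O(r)$, as claimed.

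\emph{Case $\ell \le r/2$.} I would take $V(G) = V_1 \cup \dots \cup V_r$ with $|V_1| = \ell m + 1$ and $|V_2|, \dots, |V_r| \ge 1$ chosen so that $\sum_i |V_i| = n$ (possible once $n$ is large, since $\sum_{i \ge 2}|V_i| = (r-\ell)m - 1 \ge r-1$); put all edges between distinct parts, a complete graph inside each $V_i$ with $i \ge 2$, and inside $V_1$ a graph $\Gamma$ on $|V_1| = \Theta(n)$ vertices which is (i) $K_{\ell+1}$-free, (ii) of minimum degree $\ge 1$, and (iii) has $\alpha_\ell(\Gamma) = o(n)$. Granting such a $\Gamma$, everything else is bookkeeping: every $K_r$ of $G$ meets $V_1$ in a clique of $\Gamma$, hence in at most $\omega(\Gamma) \le \ell$ vertices, so any $K_r$-factor (which has exactly $m$ copies) covers at most $\ell m < |V_1|$ vertices of $V_1$ and cannot exist; a vertex of $V_1$ has degree $(n - |V_1|) + \deg_\Gamma(v) \ge (r-\ell)m - 1 + 1 = \tfrac{(r-\ell)n}{r}$ and a vertex of $V_i$ with $i \ge 2$ has degree $n - 1$, so $\delta(G) \ge \tfrac{(r-\ell)n}{r}$; and if $S \subseteq V(G)$ is $K_\ell$-free then $|S \cap V_i| \le \ell - 1$ for each $i \ge 2$ while $|S \cap V_1| \le \alpha_\ell(\Gamma)$, whence $\alpha_\ell(G) \le \alpha_\ell(\Gamma) + (r-1)(\ell-1) = o(n)$. (Note $G$ does contain copies of $K_r$ — one vertex per part — so this is a genuine example.) Since $\ell \le r/2$ gives $\tfrac{(r-\ell)n}{r} \ge n/2$, this construction beats the previous one precisely in this regime.

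\emph{The main obstacle} is producing $\Gamma$: a $K_{\ell+1}$-free graph on $N$ vertices with $\alpha_\ell = o(N)$ (minimum degree $\ge 1$ is then immediate, by adding a sparse matching, which creates no $K_{\ell+1}$; the construction below already has minimum degree $\to \infty$). For $\ell = 2$ this is the classical fact that triangle-free graphs can have sublinear independence number. For general $\ell$ I would take $\Gamma_0 = G(N, p)$ with $p = cN^{-2/(\ell+1)}$ for a small constant $c$ and apply the deletion method: the expected number of copies of $K_{\ell+1}$ is $\Theta\bigl(c^{\binom{\ell+1}{2}}N\bigr)$, so deleting one vertex from each leaves a $K_{\ell+1}$-free graph $\Gamma$ on $\Theta(N)$ vertices (and with maximum degree $O(Np) = o(N)$, the property used when such graphs appear in the proof-strategy discussion); meanwhile, for any fixed $\eps > 0$ and any $S$ with $|S| \ge \eps N$ the expected number of copies of $K_\ell$ spanned by $S$ is $\Theta_\eps\bigl(N^{2\ell/(\ell+1)}\bigr) = N^{1 + \Omega_\ell(1)}$, so Janson's inequality together with a union bound over all such $S$ shows that with high probability every linear-sized vertex set spans a $K_\ell$, giving $\alpha_\ell(\Gamma) \le \alpha_\ell(\Gamma_0) = o(N)$; restricting to a subset of the exact size needed yields the $\Gamma$ above. (Alternatively one may simply quote the Erd\H{o}s--Rogers theorem that $f_{\ell,\ell+1}(N) = o(N)$.) This probabilistic input — that a random graph near the $K_{\ell+1}$-threshold is $K_{\ell+1}$-free after a few deletions yet retains a $K_\ell$ in every linear-sized set — is where the content lies; the rest is routine, and reporting the better of the two constructions according to the sign of $r - 2\ell$ gives the proposition.
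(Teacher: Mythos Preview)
Your proof is correct, and the case $\ell>r/2$ (two slightly unbalanced cliques) matches what the paper uses implicitly. For $\ell\le r/2$, however, your construction is genuinely different from the paper's. The paper writes $r=x\ell+y$ with $1\le y\le\ell$, takes an $(x+1)$-partite complete graph with one part of size $yn/r-1$ and the rest of size roughly $\ell n/r$, and places a copy of $\Gamma_\ell$ inside \emph{every} part; the obstruction to a $K_r$-factor is that each $K_r$ needs at least $y$ vertices from the small part, which is too small. You instead put $\Gamma$ inside a single oversized part $V_1$ of size $\ell m+1$ and make all other parts cliques; your obstruction is the dual one, that each $K_r$ can take at most $\ell$ vertices from $V_1$, which is too large. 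Your version is arguably cleaner --- only one part needs the probabilistic ingredient, and the $\alpha_\ell$ bound is immediate since the remaining parts are cliques --- while the paper's balanced construction is closer in spirit to the classical Hajnal--Szemer\'edi extremal examples. The paper also builds $\Gamma_\ell$ differently, using the FKG inequality to lower-bound $\Pr[G(n,p)\text{ is }K_{\ell+1}\text{-free}]$ at $p=n^{-2/(\ell+1)}$ rather than your deletion/Janson argument; both routes are standard.
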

\begin{proof}
Let $\Gamma_\ell(n)$ be a $K_{\ell+1}$-free graph with $n \ge n_0$ vertices, $\alpha_\ell(G) = o(n)$ and maximum degree $o(n)$. Such a graph can be obtained, for example, by considering a random graph $G(n,p)$ with edge probability $p = n^{-2/(\ell+1)}$: By applying the FKG inequality we have
\begin{align*}
  \Pr[G(n,p) \text{ is } K_{\ell+1}\text{-free}] &\ge \prod_{S \in \binom{[n]}{\ell+1}} \Pr[G(n,p)[S] \neq K_{\ell+1}] \\
  &\ge \left(1 - p^{\binom{\ell+1}{2}}\right)^{\binom{n}{\ell+1}} \ge \exp \left( -2p^{\binom{\ell+1}{2}} n^{\ell+1} \right) = \exp (-2n).
\end{align*}
In the third inequality we used $1 - x > e^{-2x}$ for $0 < x < 1/2$. On the other hand, for $p \ge n^{-2/\ell + \eps}$ for some $\eps > 0$, we have that a fixed subset of $G(n,p)$ of size $n^{1 - \eps/2}$ contains $K_\ell$ with probability $1 - \exp(\Omega(n^{2 - \eps} p))$. A union bound over all subsets of such size shows that $\alpha_\ell(G(n,p)) \le n^{1 - \eps/2}$ with probability $1 - 2^n \exp(\Omega(n^{2-\eps} p))$. By choosing sufficiently small $\eps > 0$, with positive probability we have both $\alpha_\ell(G(n,p)) = o(n)$ and $G(n,p)$ is $K_{\ell+1}$-free. 

Having graphs $\Gamma_\ell(n)$ at hand, we can finish the proof of the claim. As mentioned before, the construction we present is a straightforward generalisation of the construction from \cite{balogh2016}. Consider some $2 \le \ell \le r/2$ and let $r = x \ell + y$ for some $x, y \in \NN$ and $1 \le y \le \ell$. We create a graph $G$ by taking an $(x+1)$-partite complete graph with one set $V_1$ of size $y n/r - 1$, one set $V_2$ of size $\ell n/ r + 1$ and all other sets $V_3, \ldots, V_{x+1}$ of size $\ell n / r$, and within each set $V_i$  put the graph $\Gamma_\ell(|V_i|)$. Such a graph has minimum degree $y n / r - 1 + (x - 1)\ell n / r = (r - \ell) n / r - 1$. Because $V_i$ does not contain $K_{\ell+1}$, any $K_r$ in such a graph $G$ has to contain at least $y$ vertices from $V_1$ and cannot contain more than $\ell$ from any other set. Therefore, a $K_r$-tiling can have at most $\lfloor |V_1|/y \rfloor < n/r$ copies of $K_r$, which is not enough to cover all the vertices in $V_2$.  
\end{proof}

To summarise, Theorem \ref{thm:main1} and Proposition \ref{claim:lower} give an upper and lower bound on Question \ref{question}. The difference between these bounds gets larger as $\ell$ goes from $2$ to $r/2$ and then decreases again as it goes further to $r - 1$. For $\ell = r - 1$ Theorem \ref{thm:main1} matches Proposition \ref{claim:lower}, thus resolving Question \ref{question} in this case. It would be very interesting to determine the correct minimum degree condition for all other cases. This adds to a list of open problems posed in \cite{balogh2016}.

Finally, it is worth mentioning that Balogh et al.\ \cite{balogh2018triangle} showed that if $\alpha_2(G) = o(n)$ and $\delta(G) \ge n/3 + o(n)$ then there exists a triangle-tiling covering all but at most $4$ vertices. Moreover, in the case when $3$ divides $n$ they characterise a `barrier' for the existence of a triangle-factor. In the same spirit, as a step towards answering Question \ref{question} it would be interesting to show that $\delta(G) \ge (r - 2)n/r + o(n)$ is sufficient for the existence of a $K_r$-tiling covering all but constantly many vertices for $r \ge 4$. The construction from Proposition \ref{claim:lower} can be modified to show that there exists a graph $G$ with $\alpha_2(G) = o(n)$ and $\delta(G) = (r-2)n/r - \eps n$ such that no $K_r$-tiling of $G$ covers more than $(1 - \eps)n$ vertices.

\paragraph{Acknowledgement.} The authors would like to thank Anita Liebenau for the valuable discussions.

\bibliographystyle{abbrv}
\bibliography{references}

\begin{thebibliography}{10}

\bibitem{alon1996h}
N.~Alon and R.~Yuster.
\newblock H-factors in dense graphs.
\newblock {\em Journal of Combinatorial Theory, Series B}, 2(66):269--282,
  1996.

\bibitem{balogh2018triangle}
J.~Balogh, A.~McDowell, T.~Molla, and R.~Mycroft.
\newblock Triangle-tilings in graphs without large independent sets.
\newblock {\em Combinatorics, Probability and Computing}, pages 1--26, 2018.

\bibitem{balogh2016}
J.~Balogh, T.~Molla, and M.~Sharifzadeh.
\newblock Triangle factors of graphs without large independent sets and of
  weighted graphs.
\newblock {\em Random Structures \& Algorithms}, 49(4):669--693, 2016.

\bibitem{balogh17perturbed}
J.~{Balogh}, A.~{Treglown}, and A.~Z. {Wagner}.
\newblock {Tilings in randomly perturbed dense graphs}.
\newblock {\em arXiv:1708.09243}, 2017.

\bibitem{corradi1963maximal}
K.~Corradi and A.~Hajnal.
\newblock On the maximal number of independent circuits in a graph.
\newblock {\em Acta Mathematica Hungarica}, 14(3-4):423--439, 1963.

\bibitem{diestel}
R.~Diestel.
\newblock {\em Graph Theory}.
\newblock Springer Publishing Company, Incorporated, 5th edition, 2017.

\bibitem{erdos70}
P.~Erd\H{o}s and V.~T. S\'os.
\newblock Some remarks on {R}amsey's and {T}ur\'an's theorem.
\newblock pages 395--404, 1970.

\bibitem{erdHos1991vertex}
P.~Erd{\H{o}}s, A.~Gy{\'a}rf{\'a}s, and L.~Pyber.
\newblock Vertex coverings by monochromatic cycles and trees.
\newblock {\em Journal of Combinatorial Theory, Series B}, 51(1):90--95, 1991.

\bibitem{erdHos1994turan}
P.~Erd{\H{o}}s, A.~Hajnal, M.~Simonovits, V.~T. S{\'o}s, and E.~Szemer{\'e}di.
\newblock {T}ur{\'a}n-{R}amsey theorems and {$K_p$}-independence numbers.
\newblock {\em Combinatorics, Probability and Computing}, 3(3):297--325, 1994.

\bibitem{erdHos1983more}
P.~Erd{\H{o}}s, A.~Hajnal, V.~T. S{\'o}s, and E.~Szemer{\'e}di.
\newblock More results on {R}amsey—{T}ur{\'a}n type problems.
\newblock {\em Combinatorica}, 3(1):69--81, 1983.

\bibitem{hajnalszemeredi}
A.~Hajnal and E.~Szemerédi.
\newblock Proof of a conjecture of {P. Erd\"{o}s}.
\newblock {\em Combinatorial Theory and its Applications II}, pages 601--623,
  1970.

\bibitem{han2009perfect}
H.~H{\`a}n, Y.~Person, and M.~Schacht.
\newblock On perfect matchings in uniform hypergraphs with large minimum vertex
  degree.
\newblock {\em SIAM Journal on Discrete Mathematics}, 23(2):732--748, 2009.

\bibitem{han2017exact}
J.~Han, A.~Lo, A.~Treglown, and Y.~Zhao.
\newblock Exact minimum codegree threshold for {$K_4^-$}-factors.
\newblock {\em Combinatorics, Probability and Computing}, 26(6):856--885, 2017.

\bibitem{komlos2000tiling}
J.~Koml{\'o}s.
\newblock Tiling tur{\'a}n theorems.
\newblock {\em Combinatorica}, 20(2):203--218, 2000.

\bibitem{komlos2001proof}
J.~Koml{\'o}s, G.~S{\'a}rk{\"o}zy, and E.~Szemer{\'e}di.
\newblock Proof of the {A}lon--{Y}uster conjecture.
\newblock {\em Discrete Mathematics}, 235(1-3):255--269, 2001.

\bibitem{komlos96}
J.~Koml\'os and M.~Simonovits.
\newblock Szemer\'edi's regularity lemma and its applications in graph theory.
\newblock In {\em Combinatorics, {P}aul {E}rd\H os is eighty, {V}ol.\ 2
  ({K}eszthely, 1993)}, volume~2 of {\em Bolyai Soc. Math. Stud.}, pages
  295--352. J\'anos Bolyai Math. Soc., Budapest, 1996.

\bibitem{krivelevich1997triangle}
M.~Krivelevich.
\newblock Triangle factors in random graphs.
\newblock {\em Combinatorics, Probability and Computing}, 6(3):337--347, 1997.

\bibitem{kuhn2009minimum}
D.~K{\"u}hn and D.~Osthus.
\newblock The minimum degree threshold for perfect graph packings.
\newblock {\em Combinatorica}, 29(1):65--107, 2009.

\bibitem{kwan2016almost}
M.~Kwan.
\newblock Almost all steiner triple systems have perfect matchings.
\newblock {\em arXiv preprint arXiv:1611.02246}, 2016.

\bibitem{lo2015f}
A.~Lo and K.~Markstr{\"o}m.
\newblock {$F$}-factors in hypergraphs via absorption.
\newblock {\em Graphs and Combinatorics}, 31(3):679--712, 2015.

\bibitem{montgomery2014embedding}
R.~Montgomery.
\newblock Embedding bounded degree spanning trees in random graphs.
\newblock {\em arXiv:1405.6559}, 2014.

\bibitem{rodl2009perfect}
V.~R{\"o}dl, A.~Ruci{\'n}ski, and E.~Szemer{\'e}di.
\newblock Perfect matchings in large uniform hypergraphs with large minimum
  collective degree.
\newblock {\em Journal of Combinatorial Theory, Series A}, 116(3):613--636,
  2009.

\bibitem{simonovits2001ramsey}
M.~Simonovits and V.~T. S{\'o}s.
\newblock Ramsey--{T}ur{\'a}n theory.
\newblock {\em Discrete Mathematics}, 229(1-3):293--340, 2001.

\bibitem{treglown2012exact}
A.~Treglown and Y.~Zhao.
\newblock Exact minimum degree thresholds for perfect matchings in uniform
  hypergraphs.
\newblock {\em Journal of Combinatorial Theory, Series A}, 119(7):1500--1522,
  2012.

\end{thebibliography}

\end{document}